\numberwithin{equation}{section}
\newcommand{\bdry}{\partial}
\newcommand{\conj}[1]{\overline{#1}}
\DeclareMathOperator{\Ric}{Ric}
\newcommand{\SU}{\mathit{SU}}
\newcommand{\Sp}{\mathit{Sp}}
\theoremstyle{theorem}
\newtheorem{thm}{Theorem}[section]
\newtheorem{lem}[thm]{Lemma}
\theoremstyle{definition}
\theoremstyle{remark}
\title[]{A construction of Poincar\'e-Einstein metrics of cohomogeneity one on the ball}
\author{Yoshihiko Matsumoto}
\address{Department of Mathematics, Graduate School of Science, Osaka University, Toyonaka, Osaka 560-0043, Japan}
\email{matsumoto@math.sci.osaka-u.ac.jp}
\address{Department of Mathematics, Stanford University, Stanford, CA 94305-2125, USA}
\subjclass[2010]{Primary 53C25; Secondary 53A30.}
\begin{document}

\begin{abstract}
	We exhibit an explicit one-parameter smooth family of Poincar\'e-Einstein metrics on the
	even-dimensional unit ball whose conformal infinities are the Berger spheres.
	Our construction is based on a Gibbons--Hawking-type ans\"atz of Page and Pope.
	The family contains the hyperbolic metric, converges to the complex hyperbolic metric at one of the ends,
	and at the other end the ball equipped with our metric
	collapses to a Poincar\'e-Einstein manifold of one lower dimension with an isolated conical singularity.
\end{abstract}

\maketitle

\section{Introduction}
\label{sec:introduction}

Existence theory of Poincar\'e-Einstein metrics awaits further progress.
In this article, we focus on those on the unit ball $B^{2n}\subset\mathbb{C}^n$
invariant under the standard action of $\SU(n)$ for $n\ge 2$,
and by the general technique of Page--Pope \cite{Page-Pope-87}
we construct a one-parameter family of such metrics.
As the generic orbits of the $\SU(n)$-action are hypersurfaces of $B^{2n}$,
those metrics are said to be of cohomogeneity one in the literature.

Our metrics will be denoted by $g_c$ with a parameter $c$ running through the interval $(0,\infty)$,
and $g_1$ is the hyperbolic metric (the Poincar\'e metric).
The conformal infinity of $g_c$ is the manifold $(S^{2n-1},[h_c])$, where $h_c$ denotes the Berger-type metric
on the sphere $S^{2n-1}$:
\begin{equation}
	\label{eq:Berger-metric}
	h_c=c\theta^2+g_{\mathbb{C}P^{n-1}}.
\end{equation}
Here $\theta$ is the standard contact form,
which is seen here as a connection form of the Hopf fibration $S^{2n-1}\to\mathbb{C}P^{n-1}$,
and $g_{\mathbb{C}P^{n-1}}$ denotes the Fubini--Study metric on $\mathbb{C}P^{n-1}$ lifted to
the horizontal distribution over $S^{2n-1}$.
We normalize $g_{\mathbb{C}P^{n-1}}$ so that its diameter is $\pi/2$ and hence $h_1$ is the round metric.

It is easily observed that any $\SU(n)$-invariant conformal class on $S^{2n-1}$ is
the class $[h_c]$ for some $c\in(0,\infty)$ when $n\ge 3$
(see the beginning of Section \ref{sec:construction-of-metrics}).
Therefore, our family establishes the validity of the following theorem.

\begin{thm}
	\label{thm:SUn-invariant-fillable}
	Let $n\ge 3$.
	Then any $\SU(n)$-invariant conformal class on $S^{2n-1}$ is the conformal infinity
	of an $\SU(n)$-invariant Poincar\'e-Einstein metric on $B^{2n}$.
\end{thm}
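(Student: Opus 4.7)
The plan is to combine two ingredients: a representation-theoretic classification of $\SU(n)$-invariant conformal classes on $S^{2n-1}$, and the existence of the family $\{g_c\}_{c>0}$ of Poincar\'e-Einstein metrics described above.

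First I would verify the classification asserted as ``easily observed.'' Writing $S^{2n-1}=\SU(n)/\SU(n-1)$, an $\SU(n)$-invariant metric corresponds to an $\SU(n-1)$-invariant inner product on the isotropy representation $\mathfrak{m}=\mathfrak{su}(n)/\mathfrak{su}(n-1)$. This splits equivariantly as $V\oplus H$, where $V$ is the one-dimensional vertical direction of the Hopf fibration (on which $\SU(n-1)$ acts trivially) and $H$ is the horizontal $(2n-2)$-plane, on which $\SU(n-1)$ acts as the underlying real form of its standard complex representation on $\mathbb{C}^{n-1}$. For $n\ge 3$ this real form is irreducible---of complex type when $n-1\ge 3$ and of quaternionic type when $n-1=2$, but irreducible in either case---and is not isomorphic to $V$. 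Schur's lemma then forces any invariant inner product on $\mathfrak{m}$ to take the form $a\theta^2+bg_{\mathbb{C}P^{n-1}}$ with $a,b>0$, whose conformal class is $[h_{a/b}]$. Hence every $\SU(n)$-invariant conformal class on $S^{2n-1}$ is $[h_c]$ for some $c\in(0,\infty)$. (The restriction $n\ge 3$ is essential: for $n=2$ the isotropy is trivial and the invariant metrics form a six-parameter family.)

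Granting this, the theorem reduces to invoking the family: given any $\SU(n)$-invariant conformal class $[g]=[h_c]$ on $S^{2n-1}$, the metric $g_c$ provides the desired $\SU(n)$-invariant Poincar\'e-Einstein filling of $B^{2n}$. The real work---and the genuine obstacle---lies not in the theorem itself but in the construction of $\{g_c\}$, which will occupy the remainder of the paper. One must write down a suitable Page--Pope ans\"atz adapted to the cohomogeneity-one $\SU(n)$-action, reduce the Einstein equations to a tractable ODE system, solve it with boundary conditions ensuring smoothness across the origin of $B^{2n}$ together with the correct Poincar\'e-Einstein asymptotics whose conformal infinity represents $[h_c]$, and verify that $c$ sweeps out the full half-line $(0,\infty)$ as the free parameter of the ansatz varies.
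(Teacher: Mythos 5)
Your first paragraph, the classification of $\SU(n)$-invariant conformal classes, is correct and is essentially the paper's own argument: decompose the isotropy representation at a point as $V_1\oplus V_2$ with $\dim V_1=1$, $\dim V_2=2n-2$, note that $\SU(n-1)$ acts irreducibly on $V_2$ (indeed transitively on lines) for $n\ge 3$, and conclude every invariant metric is $\lambda_1\theta^2+\lambda_2 g_{\mathbb{C}P^{n-1}}$, hence conformal to some $h_c$. Your remark on why $n=2$ fails also matches the paper.

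The gap is in the second half. The content of the theorem is precisely the existence of the family $g_c$, and your proposal does not establish it; it lists the steps as a program ("write down a suitable ans\"atz, \dots, solve it with boundary conditions ensuring smoothness across the origin, \dots, verify that $c$ sweeps out the full half-line") without carrying out the one step where the difficulty is concentrated. In the Page--Pope reduction the Einstein equations are solved by quadrature, producing $\beta^2=c^2(r^2-1)^{-n+1}P(r)$ with $P$ determined only up to an additive multiple of $r$; the entire issue is whether the integration constant can be chosen, for every $c>0$, so that the metric on $(1,\infty)\times S^{2n-1}$ closes up smoothly at $r=1$ (the center of the ball) rather than forming a cone or a curvature singularity. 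The paper's proof fixes $P_n=(2n-1)Q_n+2nc^{-1}Q_{n-1}$ with $Q_k=r\int_1^r t^{-2}(t^2-1)^k\,dt$, proves the key lemma that $Q_k$ is divisible by $(r-1)^{k+1}$ with $\tilde{Q}_k(1)=2^k/(k+1)$, deduces $P_n=2^nc^{-1}s^n+O(s^{n+1})$ in $s=r-1$, and then checks via the substitution $s=2c^{-1}\rho^2/(1-\rho^2)$ that $\tilde{\alpha}^2=4+O(\rho^2)$, $\beta^2=4\rho^2+O(\rho^4)$, $\gamma^2=4\rho^2+O(\rho^4)$ with expansions in even powers of $\rho$, so that $\rho=0$ is a removable polar singularity; positivity of $Q_k$ on $(1,\infty)$ guarantees the solution lives on the whole interval, and the asymptotics $\beta^2/\gamma^2\to c$ identify the conformal infinity as $[h_c]$. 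None of these points (the correct normalization of $P$, the divisibility lemma, the parity of the expansion at the origin, the identification of the boundary conformal class) appears in your proposal, and without them the theorem is reduced to an assumption rather than proved.
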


When $n=2$, our family recovers the following metrics of Pedersen \cite[Equation (1.6)]{Pedersen-86}
constructed by techniques of twistor theory ($\rho$ denotes the Euclidean distance from the origin of $B^4$,
and we put the factor $4$ in front of the original formula):
\begin{equation}
	\label{eq:Pedersen-metric}
	g=\frac{4}{(1-\rho^2)^2}
	\left(\frac{1+m^2\rho^2}{1+m^2\rho^4}d\rho^2
	+\frac{\rho^2(1+m^2\rho^4)}{1+m^2\rho^2}\theta^2
	+\rho^2(1+m^2\rho^2)g_{\mathbb{C}P^1}\right).
\end{equation}
The parameter $m^2$ should be regarded as a formal symbol which takes any real value
greater than $-1$. Its relation to the parameter $c$ of our family $g_c$ is given by $c^{-1}=1+m^2$.
In this dimension, the set of all $\SU(2)$-invariant conformal classes is not exhausted by
the Berger metrics, and the existence of Poincar\'e-Einstein metrics
for arbitrary $\SU(2)$-invariant conformal infinity is proved by Hitchin \cite[Theorem 8]{Hitchin-95}.

There is a notable observation regarding the limiting behavior of Pedersen's family \eqref{eq:Pedersen-metric}:
It converges to the complex hyperbolic metric as $m^2\to -1$, or equivalently, $c\to\infty$
\cite[Remark (7.2), iv)]{Pedersen-86}.
Interestingly enough, we can show that our family $g_c$ has the same feature for arbitrary $n$.
Let us write, on $B^{2n}$,
\begin{equation}
	\label{eq:complex-hyperbolic-metric}
	g_{\mathbb{C}H^n}=2\left(\frac{d\rho^2+\rho^2\theta^2}{(1-\rho^2)^2}
	+\frac{\rho^2g_{\mathbb{C}P^{n-1}}}{1-\rho^2}\right).
\end{equation}
The complex hyperbolic metric normalized in this way satisfies $\Ric(g_{\mathbb{C}H^n})=-(n+1)g_{\mathbb{C}H^n}$,
whereas $\Ric(g_c)=-(2n-1)g_c$ according to our convention on Poincar\'e-Einstein metrics.
Then we may formulate the convergence result as follows.
Note that each of our metrics $g_c$ retains the freedom that it can be pulled back by any
$\SU(n)$-equivariant diffeomorphism from $\overline{B^{2n}}$ to itself (although there might be more arbitrariness).
In the following theorem, we refer to the operation of choosing one of such pullbacks of $g_c$
as a \emph{normalization} of $g_c$.

\begin{thm}
	\label{thm:limit-c-infinity}
	Let $n\ge 2$. If each $g_c$ is suitably normalized,
	then $g_c$ converges to $\frac{n+1}{2n-1}g_{\mathbb{C}H^n}$ as $c\to\infty$
	in the $C^\infty$ topology on any compact subset of $B^{2n}$.
\end{thm}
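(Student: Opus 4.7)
The strategy is to combine the explicit Page--Pope formula for $g_c$ (to be exhibited in Section \ref{sec:construction-of-metrics}) with a judicious choice of SU($n$)-equivariant radial diffeomorphism. Since $\SU(n)$ acts transitively on each $(2n-1)$-sphere centered at the origin, an $\SU(n)$-equivariant self-diffeomorphism of $\overline{B^{2n}}$ preserving the Hopf fibration is essentially a reparametrization of a radial coordinate; this is the freedom encoded in the word ``normalization.''

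I would first write each $g_c$ on $B^{2n}\setminus\{0\}$ in warped-product form $dr^2 + A_c(r)^2\theta^2 + B_c(r)^2 g_{\mathbb{C}P^{n-1}}$, and observe that $\frac{n+1}{2n-1}g_{\mathbb{C}H^n}$ takes the same shape in the coordinate $\rho$ from \eqref{eq:complex-hyperbolic-metric}, with Fubini--Study coefficient $\frac{2(n+1)}{2n-1}\cdot\frac{\rho^2}{1-\rho^2}$. Normalizing $g_c$ by the radial diffeomorphism characterized by the identity $B_c(\rho)^2 = \frac{2(n+1)}{2n-1}\cdot\frac{\rho^2}{1-\rho^2}$ leaves $A_c(\rho)^2$ and $(dr/d\rho)^2$ as explicit functions of $\rho$ and $c$ through the Page--Pope formulas. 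The heart of the proof is then a direct verification that these two functions converge, as $c\to\infty$, to the corresponding coefficients of $\frac{n+1}{2n-1}g_{\mathbb{C}H^n}$ at each $\rho\in(0,1)$. Because the resulting expressions are elementary in $c$ and $\rho$, pointwise convergence upgrades automatically to locally uniform $C^\infty$ convergence on $(0,1)$, and hence, by $\SU(n)$-invariance together with the uniform smoothness of the family across $r=0$, to $C^\infty$ convergence on compact subsets of $B^{2n}$.

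The main obstacle, and the reason the ``normalization'' clause is indispensable, is that the conformal infinities $[h_c]$ degenerate as $c\to\infty$---the Hopf fiber direction blows up---so the unnormalized metrics $g_c$ admit no nontrivial limit near $\partial B^{2n}$. The correct normalization compresses the outer shell of $B^{2n}$ into a thinner and thinner layer near the boundary while simultaneously expanding the fiber direction, reproducing in the limit the characteristic CR-type boundary asymptotics of $g_{\mathbb{C}H^n}$. Verifying that the prescription $B_c(\rho)^2 = \frac{2(n+1)}{2n-1}\cdot\frac{\rho^2}{1-\rho^2}$ accomplishes this---i.e., that $A_c(\rho)$ grows as $\rho\to 1$ in precisely the right way to absorb the factor $c$ and leave a finite limit---is where one must invoke the specific form of the Page--Pope solutions rather than merely their qualitative structure.
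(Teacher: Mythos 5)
Your proposal follows essentially the same route as the paper: a $c$-dependent radial reparametrization (the paper uses \eqref{eq:coordinate-change-r-to-rho}, which pins the Fubini--Study coefficient only asymptotically rather than exactly, followed by the fixed substitution $1-\tilde{\rho}^2=(1-\rho^2)/(1+a_n\rho^2)$ at the end) combined with coefficient-by-coefficient convergence of the explicit Page--Pope data. The one computation you defer---that the $d\rho^2$ and $\theta^2$ coefficients really do limit onto those of $\frac{n+1}{2n-1}g_{\mathbb{C}H^n}$---is exactly where the paper invokes Lemma \ref{lem:polynomial-Q} to extract the leading term $c^{-n-1}p_{n+1}(\rho)$ of $P_n$ and thereby identify $a_n=3(n-1)/(n+1)$.
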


Naturally, we are also interested in the other limit.
Note first that, as $c\to 0$, $(S^{2n-1},h_c)$ collapses to $(\mathbb{C}P^{n-1},g_{\mathbb{C}P^{n-1}})$.
The warped product
\begin{equation}
	\label{eq:limit-metric-c-zero}
	g_0=dr^2+\frac{2n}{2n-3}\sinh^2r\cdot g_{\mathbb{C}P^{n-1}}
\end{equation}
is an Einstein metric on $X^*=(0,\infty)\times\mathbb{C}P^{n-1}$,
and this is actually a Poincar\'e-Einstein metric
defined on a collar neighborhood of $\mathbb{C}P^{n-1}$ with conformal infinity $[g_{\mathbb{C}P^{n-1}}]$.
In fact, if we put $e^{-2r}=\frac{1}{4}b_nx^2$ where $b_n=2n/(2n-3)$, then
\eqref{eq:limit-metric-c-zero} turns out to be
\begin{equation*}
	x^{-2}\left(dx^2+\left(1-\tfrac{1}{4}b_nx^2\right)^2g_{\mathbb{C}P^{n-1}}\right),
\end{equation*}
which is the Fefferman--Graham normal form of the Poincar\'e-Einstein metric relative to $g_{\mathbb{C}P^{n-1}}$
(see \cite[Equation (7.13)]{Fefferman-Graham-12}).
The space $X^*$ equipped with the metric \eqref{eq:limit-metric-c-zero}
has a conical singularity at $r=0$ (unless $n=2$, in which case the apparent singularity $r=0$ is just removable).
One can prove that $(B^{2n},g_c)$ collapses to $(X^*,g_0)$ as $c\to 0$ in the sense of the theorem below.
Let $\tilde{X}=\mathbb{R}^{2n}$ and $\tilde{X}^*=\mathbb{R}^{2n}\setminus\set{o}$,
the latter being identified with $(0,\infty)\times S^{2n-1}$ by the polar coordinates.
We define $p\colon\tilde{X}^*\to X^*$ by $p(r,q)=(r,\varphi(q))$
in terms of the Hopf mapping $\varphi\colon S^{2n-1}\to\mathbb{C}P^{n-1}$.
Let $B_R$ (resp.~$B_{R_1,R_2}$) be the subset $\set{0<r<R}$ (resp.~$\set{R_1<r<R_2}$) of $X^*$,
where $R>0$ (resp.~$R_2>R_1>0$).

\begin{thm}
	\label{thm:limit-c-zero}
	For $n\ge 2$, there is a family of $\SU(n)$-equivariant diffeomorphisms
	$\Phi_c\colon \tilde{X}^*\to B^{2n}\setminus\set{o}$ with the following properties:
	\begin{enumerate}[(i)]
		\item
			$\Phi_c$ extends to a homeomorphism $\tilde{X}\to B^{2n}$.
		\item
			Let $g'_c$ be the metric on $X^*$ for which
			$p$ is a Riemannian submersion from $(\tilde{X}^*,\Phi_c^*g_c)$ to $(X^*,g'_c)$.
			Then $g'_c$ converges to $g_0$ as $c\to 0$, in the $C^\infty$ topology on any $B_{R_1,R_2}$.
		\item
			The diameters of the fibers of $p\colon(\tilde{X}^*,\Phi_c^*g_c)\to(X^*,g'_c)$
			converge to zero as $c\to 0$, uniformly over any $B_R$.
	\end{enumerate}
\end{thm}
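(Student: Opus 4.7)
My plan is to leverage the explicit Page--Pope form of $g_c$ derived in the construction section. After reparametrizing the radial direction by arc length $r$ measured from the origin $o\in B^{2n}$, the metric takes the cohomogeneity-one shape
\begin{equation*}
  g_c = dr^2 + A_c(r)^2\,\theta^2 + B_c(r)^2\,g_{\mathbb{C}P^{n-1}}
\end{equation*}
on $B^{2n}\setminus\set{o}$, for positive smooth functions $A_c, B_c$ on $(0,\infty)$. I would then let $\Phi_c\colon\tilde{X}^*\to B^{2n}\setminus\set{o}$ be the $\SU(n)$-equivariant map that acts as the identity on the angular variable and identifies $r$ with the Euclidean radius on $\tilde{X}^*$. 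Property (i) is immediate: the smoothness of $g_c$ at $o$ forces $A_c(r),B_c(r)$ to vanish linearly at $r=0$, and this makes $\Phi_c$ extend continuously to $o$.

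Under this choice the submersion metric becomes $g'_c = dr^2 + B_c(r)^2 g_{\mathbb{C}P^{n-1}}$, and each fiber of $p$ over a point at radius $r$ has diameter at most $\pi A_c(r)$. So (ii) and (iii) reduce respectively to proving $B_c(r)^2 \to \frac{2n}{2n-3}\sinh^2 r$ in $C^\infty$ on each $[R_1,R_2]\subset(0,\infty)$, and $A_c \to 0$ uniformly on each $(0,R)$. Both convergences should follow from the ODE system satisfied by $(A_c, B_c)$ in the Page--Pope ansatz: as $c\to 0$ the equation for $B_c$ should formally decouple from $A_c$ and reduce to the warped-product Einstein equation whose solution vanishing at $r=0$ is precisely $\sqrt{2n/(2n-3)}\,\sinh r$. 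Continuous dependence of ODE solutions on parameters should then furnish the required $C^\infty$ convergence on compact subintervals of $(0,\infty)$.

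The main obstacle will be (iii), namely the uniform decay of $A_c$ up to $r=0$. Near the origin $g_0$ has a conical singularity that $g_c$ smooths out in a $c$-dependent manner---the smoothness condition at $o$ typically forces $A_c(r)\sim\sqrt{c}\,r$ for small $r$---so pointwise convergence on compacta of $(0,R)$ is not by itself enough. A global monotonicity or comparison argument on the Page--Pope ODE is likely needed to extract a uniform bound $A_c(r)\le \eta(c)$ throughout $(0,R)$ with $\eta(c)\to 0$. One must also verify that the parameter dependence of the ODE solutions remains smooth in the limit $c\to 0$, so that the convergence in (ii) is genuinely in $C^\infty$ and not merely $C^0$; this should be tractable given the explicit formulas available from the construction, but it is where the real analytic content of the theorem lies.
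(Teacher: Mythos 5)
Your reduction is the right one --- writing $g_c$ in cohomogeneity-one form, taking $\Phi_c$ to be a radial reparametrization, and reducing (ii) and (iii) to convergence of the two warping functions is exactly the skeleton of the paper's argument (the paper uses the coordinate $t$ defined by $r=c^{-1/2}t+1$ rather than arc length, proving convergence to $g_0$ in its equivalent form \eqref{eq:limit-metric-c-zero-in-t}; your arc-length normalization could also be made to work, at the cost of checking that the arc-length reparametrization itself converges). The problem is that the entire analytic content is deferred to mechanisms that do not apply. The metrics $g_c$ are not being treated here as an abstract family of ODE solutions depending continuously on a parameter: the limit $c\to 0$ is a collapsing, degenerate limit in which the coefficient of $\theta^2$ disappears and the limiting equation is of a different type, and the initial conditions at $o$ are singular, so ``continuous dependence of ODE solutions on parameters'' yields nothing. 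Your heuristic $A_c(r)\sim\sqrt{c}\,r$ near $o$ is also false: smoothness of $g_c$ at $o$ forces $A_c(r)=r+O(r^3)$ in arc length (the Hopf circle at distance $r$ from a smooth point has length $\approx 2\pi r$), with no factor of $\sqrt{c}$. Hence the uniform smallness of $A_c$ on $(0,R)$ required for (iii) is a genuinely global bound, and this is precisely the step you leave to an unspecified ``monotonicity or comparison argument.'' As written, the proposal is a plan rather than a proof.

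The paper closes both gaps by direct computation with the explicit data: substituting $r=c^{-1/2}t+1$ into \eqref{eq:alpha-beta-gamma} and \eqref{eq:P-for-SU-invariant-PE} gives
\begin{equation*}
	P_n=c^{-n}\bigl(t^{2n}+b_nt^{2n-2}\bigr)+c^{-n+1/2}\Psi,
\end{equation*}
with $\Psi$ and all its $t$-derivatives uniformly bounded on $B_{R_1,R_2}$, whence $\tilde{\alpha}^2\to(t^2+b_n)^{-1}$ and $\gamma^2\to t^2$ in $C^\infty$ on $B_{R_1,R_2}$, which is (ii). For (iii) one uses that, by Lemma \ref{lem:polynomial-Q}, $Q_{n-1}$ is a combination of $s^n,\dotsc,s^{2n-2}$ and $Q_n$ of $s^{n+1},\dotsc,s^{2n}$, together with the elementary uniform bound $(r^2-1)^{-n+1}s^k\le 2^{k-2n+2}$ for the relevant exponents, to conclude that $\beta^2=(2n-1)c^2(r^2-1)^{-n+1}Q_n+2nc(r^2-1)^{-n+1}Q_{n-1}\to 0$ uniformly on all of $B_R$. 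To complete your argument you should abandon the ODE-perturbation route and carry out this substitution explicitly.
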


We have not yet recalled the definitions of Poincar\'e-Einstein metrics and their conformal infinities.
A brief review on them is given in the beginning of Section \ref{sec:construction-of-metrics},
in which we then construct our metrics $g_c$.
For further background on Poincar\'e-Einstein metrics, we refer the reader to
Fefferman--Graham \cites{Fefferman-Graham-85,Fefferman-Graham-12} and Graham--Lee \cite{Graham-Lee-91},
for example.
Section \ref{sec:page-pope-construction} is devoted to a sketch of the Page--Pope construction,
and in Section \ref{sec:limiting-behavior}, the limiting behavior of the family $g_c$ is investigated.

We remark that our construction of $g_c$ in Section \ref{sec:construction-of-metrics} is also discussed
by Mazzeo and Singer \cite{Mazzeo-Singer-07preprint} to some extent.
More specifically, they use the Page--Pope argument in order to construct Poincar\'e-Einstein metrics,
possibly with singularities, just as we do.
Regarding this part, the novelty of our work lies in the observation that
the metrics so constructed are actually smooth for a right choice of a parameter
(made in \eqref{eq:P-for-SU-invariant-PE} below) involved in the construction.

Cohomogeneity one Poincar\'e-Einstein metrics on the ball
are also recently studied by Li \cites{GangLi-16,GangLi-17,GangLi-18}.
His approach is based on an \emph{a priori} estimate
taking advantage of the volume inequality of Dutta--Javaheri \cite{Dutta-Javaheri-10} and
Li--Qing--Shi \cite{Li-Qing-Shi-17} given in terms of the Yamabe constant of the conformal infinity.
While we have obtained the complete existence result on $\SU(n)$-invariant Poincar\'e-Einstein metrics on $B^{2n}$
(which is discussed in \cites{GangLi-17} to some extent),
the uniqueness aspect in \cite{GangLi-16,GangLi-17,GangLi-18} remains interesting,
and the problem about $\Sp(n)$-invariant metrics on $B^{4n}$ taken up in \cite{GangLi-18} is definitely fascinating.

This work was carried out while the author was working as a visiting scholar at Stanford University,
whose hospitality is gratefully acknowledged.
I am indebted to Rafe Mazzeo for various discussions and for his constant support.
Ryoichi Kobayashi inspired me by his question on the idea of
interpolating the real and the complex hyperbolic metrics by Einstein metrics,
and Jeffrey Case helped me find related literature.
I would also like to thank Kazuo Akutagawa for a valuable comment based on which I could clarified the description.
This work was partially supported by JSPS KAKENHI Grant Number JP17K14189 and JSPS Overseas Research Fellowship.

\section{The Page--Pope construction}
\label{sec:page-pope-construction}

The general construction of Einstein metrics due to Page--Pope \cite{Page-Pope-87} is given in the following setting.
Let $M$ be an $(n-1)$-dimensional complex manifold
equipped with a hermitian line bundle $L$ and a hermitian connection of $L$,
and $C$ be the principal $S^1$-bundle associated to $L$ (which is the bundle of unit vectors in $L$).
Locally, if $\tau\in\mathbb{R}/2\pi\mathbb{Z}$ is the fiber coordinate with respect to
an arbitrarily fixed trivialization of $C$ over an open set $U\subset M$,
then the connection form $\theta$ can be written as
\begin{equation*}
	\theta=d\tau+A,
\end{equation*}
where $A$ is a 1-form that descends to $U$
(note that we adopt the convention in which $\theta$ is an $\mathbb{R}$-valued form
rather than $i\mathbb{R}$-valued). The curvature 2-form is
\begin{equation*}
	F=dA,
\end{equation*}
and this is globally defined on $M$.
We \emph{assume} that $F$ is a K\"ahler $(1,1)$-form associated to a K\"ahler-Einstein metric $\overline{g}$ on $M$,
and set $\Ric(\overline{g})=\lambda\overline{g}$.

Let the variable $r$ run through an open interval $I$ that will be determined later.
On the $2n$-dimensional space $I\times C$, we consider metrics of the form
\begin{equation}
	\label{eq:ansatz}
	g=\alpha(r)^2dr^2+\beta(r)^2\theta^2+\gamma(r)^2\overline{g}
\end{equation}
with positive functions $\alpha$, $\beta$, and $\gamma$.
Let $\theta^a$ ($a=2$, $3$, $\dotsc$, $2n-1$) be an orthonormal local coframe on $(M,\overline{g})$, so that
\begin{equation}
	\label{eq:coframe}
	\eta^0=\alpha\,dr,\qquad
	\eta^1=\beta\theta,\qquad
	\eta^a=\gamma\theta^a
\end{equation}
is an orthonormal coframe on $(I\times C,g)$.
We express the Ricci tensor $R=\Ric(g)$ by using the index notation relative to this coframe.
The tedious computation is carried out in \cite[Section 2]{Page-Pope-87},
according to which all the off-diagonal components vanish, and the diagonal components are given by
\begin{align*}
	\tensor{R}{_0_0}
	&=\left(-\frac{\beta''}{\alpha^2\beta}+\frac{\alpha'\beta'}{\alpha^3\beta}\right)
	+(2n-2)\left(-\frac{\gamma''}{\alpha^2\gamma}+\frac{\alpha'\gamma'}{\alpha^3\gamma}\right),\\
	\tensor{R}{_1_1}
	&=\left(-\frac{\beta''}{\alpha^2\beta}+\frac{\alpha'\beta'}{\alpha^3\beta}\right)
	+(2n-2)\left(-\frac{\beta'\gamma'}{\alpha^2\beta\gamma}+\frac{\beta^2}{\gamma^4}\right),\\
	\tensor{R}{_a_a}
	&=-\frac{\gamma''}{\alpha^2\gamma}+\frac{\alpha'\gamma'}{\alpha^3\gamma}
	-\frac{\beta'\gamma'}{\alpha^2\beta\gamma}-\frac{2\beta^2}{\gamma^4}
	-(2n-3)\left(\frac{\gamma'}{\alpha\gamma}\right)^2+\frac{\lambda}{\gamma^2}.
\end{align*}

Now suppose the Einstein equation $\Ric(g)=\Lambda g$ is satisfied.
Then, since $\tensor{R}{_0_0}=\tensor{R}{_1_1}=\Lambda$, it is necessary that
\begin{equation*}
	-\tensor{R}{_0_0}+\tensor{R}{_1_1}
	=(2n-2)\left(\frac{\gamma''}{\alpha^2\gamma}-\frac{\alpha'\gamma'}{\alpha^3\gamma}
	-\frac{\beta'\gamma'}{\alpha^2\beta\gamma}+\frac{\beta^2}{\gamma^4}\right)=0,
\end{equation*}
or equivalently,
\begin{equation}
	\label{eq:Einstein-eq-transverse}
	\alpha\beta\gamma''-(\alpha'\beta+\alpha\beta')\gamma'+\alpha^3\beta^3\gamma^{-3}=0.
\end{equation}
We here impose an additional assumption that
\begin{equation}
	\label{eq:alpha-beta-constant}
	\alpha\beta=c,\qquad\text{where $c>0$ is a constant}.
\end{equation}
No generality is lost by this, since we have the freedom of reparametrizing $r$
under the ans\"atz \eqref{eq:ansatz}.
Equation \eqref{eq:Einstein-eq-transverse} then becomes
\begin{equation}
	\label{eq:Einstein-eq-transverse-restated}
	\gamma''+c^2\gamma^{-3}=0.
\end{equation}
Integrating \eqref{eq:Einstein-eq-transverse-restated} we get $(\gamma')^2=c^2\gamma^{-2}+C_1$.
If we here set $C_1=c$, then
\begin{equation*}
	\gamma'=\pm\gamma^{-1}\sqrt{c(\gamma^2+c)},
\end{equation*}
and hence
\begin{equation*}
	\frac{\gamma}{\sqrt{\gamma^2+c}}\gamma'=\pm\sqrt{c}.
\end{equation*}
This implies that
\begin{equation}
	\label{eq:gamma-in-r}
	\gamma^2=c(r^2-1)
\end{equation}
is sufficient for \eqref{eq:Einstein-eq-transverse-restated} to hold true.

We can simplify the equation $\tensor{R}{_a_a}=\Lambda$, which is
\begin{equation*}
	-\frac{\gamma''}{\alpha^2\gamma}+\frac{\alpha'\gamma'}{\alpha^3\gamma}
	-\frac{\beta'\gamma'}{\alpha^2\beta\gamma}-\frac{2\beta^2}{\gamma^4}
	-(2n-3)\left(\frac{\gamma'}{\alpha\gamma}\right)^2+\frac{\lambda}{\gamma^2}=\Lambda,
\end{equation*}
by \eqref{eq:alpha-beta-constant}, \eqref{eq:gamma-in-r} and their consequences such as
$\alpha^{-1}\alpha'+\beta^{-1}\beta'=0$, $\gamma'=cr\gamma^{-1}$, and $\gamma''=-c^2\gamma^{-3}$.
Eventually we get
\begin{equation}
	\label{eq:Einstein-eq-tangential}
	-\beta^2-(2n-3)r^2\beta^2-2r(r^2-1)\beta\beta'+\lambda c(r^2-1)=\Lambda c^2(r^2-1)^2.
\end{equation}
Let us now set
\begin{equation}
	\label{eq:beta-given-by-P}
	\beta^2=c^2(r^2-1)^{-n+1}P(r).
\end{equation}
Then one can verify that \eqref{eq:Einstein-eq-tangential} is equivalent (away from $r=0$, $\pm 1$) to
\begin{equation*}
	(r^{-1}P)'=\lambda c^{-1}r^{-2}(r^2-1)^{n-1}-\Lambda r^{-2}(r^2-1)^n.
\end{equation*}
Thus we obtain, suppressing the freedom of choosing the integration constant,
\begin{equation}
	\label{eq:P-as-integral}
	P=r\int(\lambda c^{-1}r^{-2}(r^2-1)^{n-1}-\Lambda r^{-2}(r^2-1)^n)dr.
\end{equation}
Note that $P$ is a polynomial in $r$.

Now the Bianchi identity shows that $R_{00}=\Lambda$ and $R_{11}=\Lambda$ are both satisfied as follows.
We already know that $\Ric(g)$ is expressed as $\Lambda g+\varphi(r)((\eta^0)^2+(\eta^1)^2)$
by some function $\varphi(r)$,
and the contracted second Bianchi identity implies $\varphi(r)\cdot d^*\eta^0=\varphi(r)\cdot d^*\eta^1=0$.
By a straightforward computation we obtain
\begin{equation*}
	d^*\eta^0=\frac{\beta'}{\alpha\beta}+(2n-2)\frac{\gamma'}{\alpha\gamma}
	=\frac{1}{\alpha}(\log \beta\gamma^{2n-2})',
\end{equation*}
and since $(\beta\gamma^{2n-2})^2=c^{2n}(r^2-1)^{n-1}P(r)$ is a polynomial
(and nonzero if $\beta$ is a positive function),
$d^*\eta^0$ does not vanish for all but a finite number of values of $r$.
Hence the function $\varphi(r)$ is identically zero.

Thus we have shown the following result.

\begin{thm}[Page--Pope \cite{Page-Pope-87}]
	Let $I$ be an open interval not intersecting $\set{0,\pm 1}$.
	If $\alpha$, $\beta$, and $\gamma$ are positive functions on $I$ expressed as
	\begin{equation}
		\label{eq:alpha-beta-gamma}
		\alpha^2=(r^2-1)^{n-1}P^{-1},\qquad
		\beta^2=c^2(r^2-1)^{-n+1}P,\qquad
		\gamma^2=c(r^2-1)
	\end{equation}
	with $P$ satisfying \eqref{eq:P-as-integral}, then the metric
	\begin{equation*}
		g=\alpha^2dr^2+\beta^2\theta^2+\gamma^2\overline{g}
	\end{equation*}
	on $I\times C$ satisfies the Einstein equation $\Ric(g)=\Lambda g$.
\end{thm}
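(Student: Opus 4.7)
The plan is to verify the Einstein equation $\Ric(g)=\Lambda g$ component by component in the orthonormal coframe \eqref{eq:coframe}. The Page--Pope formulas quoted above already show that $\Ric(g)$ is diagonal in this frame with $\tensor{R}{_a_a}$ independent of $a$, so the Einstein condition reduces to the three scalar equations $\tensor{R}{_0_0}=\Lambda$, $\tensor{R}{_1_1}=\Lambda$, and $\tensor{R}{_a_a}=\Lambda$. My strategy is to verify the difference $\tensor{R}{_0_0}=\tensor{R}{_1_1}$, then verify $\tensor{R}{_a_a}=\Lambda$, and finally invoke the contracted Bianchi identity to upgrade the first to the two individual equations $\tensor{R}{_0_0}=\tensor{R}{_1_1}=\Lambda$.

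First, I observe that the auxiliary normalization $\alpha\beta=c$ is automatic: from \eqref{eq:alpha-beta-gamma} we have $\alpha^2\beta^2=c^2$. With this in hand, the difference $\tensor{R}{_1_1}-\tensor{R}{_0_0}$ collapses to the transverse equation \eqref{eq:Einstein-eq-transverse-restated}, and the proposed $\gamma^2=c(r^2-1)$ gives $\gamma\gamma'=cr$ and then $\gamma''=-c^2\gamma^{-3}$ by differentiating once more; this is exactly the required identity.

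Next, I tackle the tangential equation $\tensor{R}{_a_a}=\Lambda$. Substituting \eqref{eq:alpha-beta-gamma} into the Page--Pope expression for $\tensor{R}{_a_a}$ and using the consequences $\alpha^{-1}\alpha'+\beta^{-1}\beta'=0$, $\gamma'=cr\gamma^{-1}$, $\gamma''=-c^2\gamma^{-3}$ that follow from the previous step, the equation reduces to \eqref{eq:Einstein-eq-tangential}. Parametrizing $\beta^2=c^2(r^2-1)^{-n+1}P(r)$ as in \eqref{eq:beta-given-by-P} turns this second-order equation into the first-order linear ODE $(r^{-1}P)'=\lambda c^{-1}r^{-2}(r^2-1)^{n-1}-\Lambda r^{-2}(r^2-1)^n$, which is solved by the integral formula \eqref{eq:P-as-integral} by construction.

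Finally, for the individual equations $\tensor{R}{_0_0}=\Lambda$ and $\tensor{R}{_1_1}=\Lambda$, the preceding two steps already show that $\Ric(g)-\Lambda g=\varphi(r)\bigl((\eta^0)^2+(\eta^1)^2\bigr)$ for a single function $\varphi(r)$. The contracted second Bianchi identity applied to this expression (noting that the scalar curvature depends only on $r$) forces $\varphi\cdot d^*\eta^0=0$. A direct computation in the coframe \eqref{eq:coframe} gives $d^*\eta^0=\alpha^{-1}(\log\beta\gamma^{2n-2})'$, and because $(\beta\gamma^{2n-2})^2=c^{2n}(r^2-1)^{n-1}P(r)$ is a nontrivial polynomial in $r$, its logarithmic derivative does not vanish identically. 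Hence $\varphi\equiv 0$ on $I$. The step I expect to be the main obstacle is the algebraic reduction of \eqref{eq:Einstein-eq-tangential} to the precise first-order form that integrates to \eqref{eq:P-as-integral}; the rest is either a direct substitution or the standard Bianchi-identity trick, and it is the cleanness of the substitution $\beta^2=c^2(r^2-1)^{-n+1}P$ that makes the whole scheme work.
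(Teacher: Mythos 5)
Your proposal is correct and follows essentially the same route as the paper: reduce $\tensor{R}{_0_0}=\tensor{R}{_1_1}$ to the transverse ODE solved by $\gamma^2=c(r^2-1)$ under $\alpha\beta=c$, reduce $\tensor{R}{_a_a}=\Lambda$ to \eqref{eq:Einstein-eq-tangential} and hence to the first-order equation for $r^{-1}P$ integrated in \eqref{eq:P-as-integral}, and finally use the contracted Bianchi identity together with the nonvanishing of $d^*\eta^0=\alpha^{-1}(\log\beta\gamma^{2n-2})'$ to force $\varphi\equiv 0$. No substantive difference from the paper's argument.
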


In the sequel, we are particularly interested in the case where $I=(r_0,\infty)$ for some $r_0\ge 1$.
Observing the behavior of the polynomial $P$ as $r\to\infty$,
one concludes that one of the following must be satisfied in order for the coefficients
$\alpha$, $\beta$, and $\gamma$ are positive for large $r$:
\begin{enumerate}[(i)]
	\item $\Lambda<0$.
	\item $\Lambda=0$ and $\lambda>0$.
\end{enumerate}
If either is satisfied, then the metric $g$ is always complete ``toward $r=\infty$'' because
\begin{equation*}
	\alpha^2=(r^2-1)^{n-1}P^{-1}\sim
	\begin{cases}
		-(2n-1)^{-1}\Lambda r^{-2}& \text{if $\Lambda<0$},\\
		(2n-3)^{-1}\lambda c^{-1}& \text{if $\Lambda=0$ and $\lambda>0$}.
	\end{cases}
\end{equation*}
The natural choice of the number $r_0$ is
\begin{equation}
	\label{eq:r-zero}
	r_0=\max(\set{1}\cup \Sigma),
	\qquad\text{where $\Sigma=\set{r\in\mathbb{R}|P(r)=0}$}.
\end{equation}
Whether $r=r_0$ is a removable singularity of $g$ or not will be our interest
in the application discussed in the next section.

\section{$\SU(n)$-invariant Poincar\'e-Einstein metrics on the ball}
\label{sec:construction-of-metrics}

Let $\overline{X}$ be a $2n$-dimensional smooth manifold-with-boundary
(hereafter ``smooth'' always means $C^\infty$), whose interior is denoted by $X$.
A Riemannian metric $g$ on $X$ is said to be \emph{smooth conformally compact}
when $x^2g$ extends to a smooth metric on $\overline{X}$ for some,
hence for any, smooth boundary defining function $x$.
When this is the case, the conformal class of $h=(x^2g)|_{T\bdry\overline{X}}$ is
called the \emph{conformal infinity} of $g$.
When a smooth conformally compact Riemannian metric $g$ satisfies the Einstein equation, then
$g$ is called a \emph{Poincar\'e-Einstein metric}.
The Einstein constant must be $-(2n-1)$ for such a metric.
(Restriction to considering \emph{smooth} conformally compact metrics is justified by
the result of Chru\'sciel--Delay--Lee--Skinner \cite{Chrusciel-Delay-Lee-Skinner-05}
because the boundary is odd-dimensional.)

The set of $\SU(n)$-invariant conformal classes on $S^{2n-1}$ can be determined as follows.
Such a class is represented by an $\SU(n)$-invariant Riemannian metric $h$,
and it corresponds to a $K$-invariant positive-definite inner product on $T_qS^{2n-1}$,
where $K$ is the isotropic subgroup of $\SU(n)$ at an arbitrarily fixed point $q\in S^{2n-1}$.
If $n\ge 3$, then $K\cong\SU(n-1)$ decomposes the tangent space irreducibly as $T_qS^{2n-1}=V_1\oplus V_2$,
where $\dim V_1=1$ and $\dim V_2=2n-2$.
As $K$ acts transitively on the set of lines in $V_2$,
$K$-invariant inner products of $V_2$ are unique up to a constant factor,
and hence we may deduce that $h=\lambda_1\theta^2+\lambda_2g_{\mathbb{C}P^{n-1}}$ for some $\lambda_1$, $\lambda_2>0$.
Then $h$ is conformal to $h_c$, where $c=\lambda_1/\lambda_2$.
If $n=2$, this argument fails because $K\cong\SU(1)$ is trivial and $V_2$ is no longer irreducible.

Now we begin the construction of our metrics $g_c$.
As the complex manifold $M$ in the general setting illustrated in the previous section,
we take the complex projective space $\mathbb{C}P^{n-1}$.
The metric $\overline{g}$ is going to be the Fubini--Study metric $g_{\mathbb{C}P^{n-1}}$,
whose Einstein constant $\lambda$ is $2n$.
The line bundle $L$ will be the tautological bundle over $\mathbb{C}P^{n-1}$, thereby
$C$ is naturally identified with the unit sphere $S^{2n-1}$ in $\mathbb{C}^n$.
The connection form $\theta$ on $S^{2n-1}$ is the standard contact form, which is
\begin{equation*}
	\theta=
	\frac{i}{2}\sum_{j=1}^n(z^jd\smash{\conj{z}}^j-\smash{\conj{z}}^jdz^j)
	\Biggr|_{TS^{2n-1}}.
\end{equation*}
The curvature form, regarded as a 2-form on $S^{2n-1}$, is
\begin{equation*}
	F=d\theta
	=i\sum_{j=1}^ndz^j\wedge d\smash{\conj{z}}^j\Biggr|_{TS^{2n-1}},
\end{equation*}
and this is nothing but the pullback of the K\"ahler form of $g_{\mathbb{C}P^{n-1}}$.
Thus we look for Einstein metric of the form
\begin{equation}
	\label{eq:ansatz-for-our-case}
	g=\alpha(r)^2dr^2+\beta(r)^2\theta^2+\gamma(r)^2g_{\mathbb{C}P^{n-1}}
\end{equation}
and they are automatically invariant under the $\SU(n)$-action on the second factor of $I\times S^{2n-1}$.
We take $\Lambda=-(2n-1)$, because it is our normalization of Poincar\'e-Einstein metrics.

Then the polynomial $P$ given by \eqref{eq:P-as-integral} provides an Einstein metric.
Any integration constant can be chosen, but we make the following particular choice
(the idea behind it is that the resulting metrics should be identical to Pedersen's for $n=2$):
\begin{equation}
	\label{eq:P-for-SU-invariant-PE}
	P=P_n
	=r\int_1^r(\lambda c^{-1}t^{-2}(t^2-1)^{n-1}-\Lambda t^{-2}(t^2-1)^n)dt
	=(2n-1)Q_n+2nc^{-1}Q_{n-1}.
\end{equation}
Here $Q_k$ is the polynomial of $r$ of order $2k$ defined by
\begin{equation}
	\label{eq:definition-of-Q}
	Q_k=r\int_1^r t^{-2}(t^2-1)^kdt.
\end{equation}
It is obvious that $Q_k$ is positive for any $r>1$, and hence $r_0=1$ (see \eqref{eq:r-zero} for
the definition of $r_0$).
Therefore \eqref{eq:P-for-SU-invariant-PE} gives an Einstein metric on $(1,\infty)\times S^{2n-1}$.

The polynomial $P_n$ is explicitly given for $n=2$ and $3$ by
\begin{subequations}
\begin{align}
	\label{eq:polynomial-P2}
	P_2&=(r-1)^3(r+3)+4c^{-1}(r-1)^2,\\
	\label{eq:polynomial-P3}
	P_3&=(r-1)^4(r^2+4r+5)+2c^{-1}(r-1)^3(r+3).
\end{align}
\end{subequations}

Under the choice made, $r=r_0=1$ turns out to be just a removable singularity.
In order to see it, the following property of $Q_k$ is crucial.

\begin{lem}
	\label{lem:polynomial-Q}
	The polynomial $Q_k$ defined by \eqref{eq:definition-of-Q} is divisible by $(r-1)^{k+1}$.
	Moreover, if we set
	\begin{equation*}
		Q_k(r)=(r-1)^{k+1}\tilde{Q}_k(r),
	\end{equation*}
	then $\tilde{Q}_k(1)=2^k/(k+1)$.
\end{lem}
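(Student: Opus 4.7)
The plan is to show that $Q_k$ has a zero of order at least $k+1$ at $r=1$ and read off the leading coefficient of its Taylor expansion there; since the text has already established that $Q_k$ is a polynomial, a zero of order $\ge k+1$ at $r=1$ immediately translates into divisibility by $(r-1)^{k+1}$, and the leading Taylor coefficient is precisely $\tilde{Q}_k(1)$.

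To isolate the vanishing at $r=1$, I would substitute $u=t-1$ in the integral defining $Q_k$. Under this change of variable, $t^2-1=u(u+2)$ and $t^{-2}=(u+1)^{-2}$, so
\begin{equation*}
	Q_k(r)=r\int_0^{r-1}u^k\,\phi(u)\,du,
	\qquad\text{where }\phi(u):=(u+1)^{-2}(u+2)^k.
\end{equation*}
The function $\phi$ is smooth in a neighborhood of $u=0$, with $\phi(0)=2^k$. Writing $\phi(u)=\phi(0)+u\psi(u)$ for a bounded $\psi$ near $0$, the integral decomposes as
\begin{equation*}
	\int_0^{r-1}u^k\phi(u)\,du=\frac{\phi(0)}{k+1}(r-1)^{k+1}+\int_0^{r-1}u^{k+1}\psi(u)\,du
	=\frac{2^k}{k+1}(r-1)^{k+1}+O\!\bigl((r-1)^{k+2}\bigr)
\end{equation*}
as $r\to 1$.

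Multiplying by the prefactor $r=1+(r-1)$ does not alter the leading order, giving
\begin{equation*}
	Q_k(r)=\frac{2^k}{k+1}(r-1)^{k+1}+O\!\bigl((r-1)^{k+2}\bigr)
	\qquad(r\to 1).
\end{equation*}
Since $Q_k$ is a polynomial, vanishing to order at least $k+1$ at $r=1$ is equivalent to divisibility by $(r-1)^{k+1}$, and evaluating $\tilde{Q}_k(r)=Q_k(r)/(r-1)^{k+1}$ at $r=1$ yields $\tilde{Q}_k(1)=2^k/(k+1)$.

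There is no real obstacle here: the only point requiring a moment of care is verifying that the $O$-term is genuinely of the stated order, which follows from the smoothness (in fact, rational regularity) of $\phi$ at $u=0$. An alternative route would be an induction on $k$ using the identity $Q_k(r)+Q_{k-1}(r)=r\int_1^r(t^2-1)^{k-1}\,dt$ obtained from $t^{-2}(t^2-1)^k=(t^2-1)^{k-1}-t^{-2}(t^2-1)^{k-1}$, but this approach forces one to track two leading coefficients simultaneously and is less direct than the substitution above.
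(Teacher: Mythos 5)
Your proof is correct. It takes a genuinely different route from the paper's, which is much shorter: the paper simply computes $Q_k(1)=Q_k'(1)=0$ and
\begin{equation*}
	Q_k''(r)=2k(r^2-1)^{k-1},
\end{equation*}
so that the order of vanishing at $r=1$ is read off at once ($2$ from the initial conditions plus $k-1$ from the explicit second derivative), and the leading coefficient follows from
$\tilde{Q}_k(1)=\frac{1}{k(k+1)}\,Q_k''(r)/(r-1)^{k-1}\big|_{r=1}=\frac{1}{k(k+1)}\cdot 2k\cdot 2^{k-1}$.
The paper's argument exploits the fortunate fact that two differentiations collapse the integral entirely (it even remarks that these three conditions characterize $Q_k$), whereas you work directly on the integral via the substitution $u=t-1$ and peel off the leading term of the integrand $\phi(u)=(u+1)^{-2}(u+2)^k$. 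Your computation is sound: the change of variables, the value $\phi(0)=2^k$, the splitting $\phi(u)=\phi(0)+u\psi(u)$ with $\psi$ analytic near $0$, and the observation that the prefactor $r=1+(r-1)$ does not disturb the leading order are all correct, and the passage from the asymptotic expansion to divisibility is legitimate because the Taylor expansion of a polynomial at a point is unique. Your method is slightly longer but more robust --- it would still work if the integrand were not so conveniently chosen that its antiderivative has an explicit second derivative --- while the paper's is the more economical for this particular $Q_k$.
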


\begin{proof}
	It is immediate that $Q_k(1)=Q'_k(1)=0$ and $Q''_k(r)=2k(r^2-1)^{k-1}$
	(this can also be seen as a characterization of $Q_k$).
	Therefore $Q_k$ is divisible by $(r-1)^{k+1}$, and
	\begin{equation*}
		\tilde{Q}_k(1)=\frac{1}{k(k+1)}\cdot\left.\frac{Q''_k(r)}{(r-1)^{k-1}}\right|_{r=1}=\frac{2^k}{k+1}.
		\qedhere
	\end{equation*}
\end{proof}

Let us complete the proof of Theorem \ref{thm:SUn-invariant-fillable}.

\begin{proof}[Proof of Theorem \ref{thm:SUn-invariant-fillable}]
	The metric $g$ on $(1,\infty)\times S^{2n-1}$ is defined
	by \eqref{eq:ansatz-for-our-case}, \eqref{eq:alpha-beta-gamma}, and \eqref{eq:P-for-SU-invariant-PE}.
	We shall use $s=r-1$ in order to track the behavior of the coefficients $\alpha$, $\beta$, $\gamma$ when $r\to 1$.
	Then $P_n$ is a polynomial also in $s$,
	and by \eqref{eq:P-for-SU-invariant-PE} and Lemma \ref{lem:polynomial-Q}, as $s\to 0$,
	\begin{equation*}
		P_n=2nc^{-1}Q_{n-1}+O(s^{n+1})=2^nc^{-1}s^n+O(s^{n+1}).
	\end{equation*}
	On the other hand, $r^2-1=2s+O(s^2)$. Therefore,
	\begin{align*}
		\alpha^2&=(r^2-1)^{n-1}P_n^{-1}=\tfrac{1}{2}cs^{-1}+O(1),\\
		\beta^2&=c^2(r^2-1)^{-n+1}P_n=2cs+O(s^2),\\
		\gamma^2&=c(r^2-1)=2cs+O(s^2),
	\end{align*}
	where each term indicated by $O(1)$ or $O(s^2)$ has a Taylor expansion in $s$.

	Now we perform the following coordinate change
	(the new variable $\rho$ varies within the range $0<\rho<1$):
	\begin{equation}
		\label{eq:coordinate-change-r-to-rho}
		s=r-1=\frac{2c^{-1}\rho^2}{1-\rho^2}.
	\end{equation}
	Then $g$ is expressed as
	\begin{equation}
		\label{eq:metric-in-terms-of-rho}
		g=\tilde{\alpha}^2d\rho^2+\beta^2\theta^2+\gamma^2g_{\mathbb{C}P^{n-1}}
		\qquad\text{with $\tilde{\alpha}^2=\alpha^2\left(\dfrac{ds}{d\rho}\right)^2$},
	\end{equation}
	where
	\begin{equation*}
		\tilde{\alpha}^2=4+O(\rho^2),\qquad
		\beta^2=4\rho^2+O(\rho^4),\qquad
		\gamma^2=4\rho^2+O(\rho^4)
	\end{equation*}
	and the terms indicated by $O(\rho^2)$ and $O(\rho^4)$ are expanded in \emph{even powers} of $\rho$.
	Since $\theta^2+g_{\mathbb{C}P^{n-1}}$ is the round metric on $S^{2n-1}$,
	this implies that $\rho=0$ is just a polar-type singularity.
	Consequently, we obtain a smooth Einstein metric on the unit ball $B^{2n}$, which we now write $g_c$,
	where $\rho$ is regarded as the Euclidean distance from the center.
	
	Interpreted as a metric on $B^{2n}$ in this way, $g_c$ is smooth conformally compact.
	To verify this, note that $r^{-1}$ extends up to the boundary of $B^{2n}$ as a smooth boundary defining function.
	As $r\to\infty$, \eqref{eq:P-for-SU-invariant-PE} implies that $P_n$ is asymptotic to $r^{2n}$ and hence
	\begin{equation}
		\label{eq:behavior-at-infinity}
		\alpha^2\sim r^{-2},\qquad
		\beta^2\sim c^2r^2,\qquad
		\gamma^2\sim cr^2,
	\end{equation}
	and it is easily seen that $r^2\alpha^2$, $r^{-2}\beta^2$, and $r^{-2}\gamma^2$ are all smooth
	up to $\bdry B^{2n}$. Since
	\begin{equation*}
		r^{-2}g_c=r^2\alpha^2d(r^{-1})^2+r^{-2}\beta^2\theta^2+r^{-2}\gamma^2g_{\mathbb{C}P^{n-1}},
	\end{equation*}
	$g_c$ is smooth conformally compact.
	Furthermore, the asymptotic behavior \eqref{eq:behavior-at-infinity} implies that
	\begin{equation*}
		\lim_{r\to\infty}\frac{\beta^2}{\gamma^2}=c,
	\end{equation*}
	and hence the conformal infinity of $g_c$ is given by the conformal class $[h_c]$, where $h_c$ is the
	Berger-type metric \eqref{eq:Berger-metric}.
\end{proof}

When $c=1$, our polynomial $P_n$ is given by $P_n=(r^2-1)^n$ and the metric $g_1$ is
\begin{equation*}
	g_1=\frac{dr^2}{r^2-1}+(r^2-1)g_{S^{2n-1}}
	=\frac{4}{(1-\rho^2)^2}(d\rho^2+\rho^2g_{S^{2n-1}}),
\end{equation*}
where $g_{S^{2n-1}}$ is the round metric on $S^{2n-1}$.
This is the hyperbolic metric.

The construction above recovers the metrics of Pedersen when $n=2$ as follows.
Recall that the expression \eqref{eq:Pedersen-metric} of the metric makes sense for $m^2>-1$.
We take $c$ so that $m^2=c^{-1}-1$.
Then the polynomial $P_2$, given by formula \eqref{eq:polynomial-P2}, becomes
\begin{equation*}
	P_2=16c^{-3}\cdot\frac{\rho^4(1+m^2\rho^4)}{(1-\rho^2)^4}
\end{equation*}
upon the coordinate change \eqref{eq:coordinate-change-r-to-rho}.
Then one can easily check that the metric $g_c$ is exactly \eqref{eq:Pedersen-metric}.

\section{Limiting behavior}
\label{sec:limiting-behavior}

In this section we describe the behavior of the metrics $g_c$, constructed in the previous section,
when the parameter $c$ tends to $\infty$ or $0$.

\subsection{Behavior when $c\to\infty$}

As mentioned in the introduction, Pedersen observed that the metric \eqref{eq:Pedersen-metric} converges
to the complex hyperbolic metric.
In fact, it is clear that \eqref{eq:Pedersen-metric} converges when $c\to\infty$ (i.e., $m^2\to -1$) to
\begin{equation*}
	g
	=\frac{4}{(1-\rho^2)^2}
	\left(\frac{1}{1+\rho^2}d\rho^2
	+\rho^2(1+\rho^2)\theta^2
	+\rho^2(1-\rho^2)g_{\mathbb{C}P^1}\right),
\end{equation*}
and if we put $1-\tilde{\rho}^2=(1-\rho^2)/(1+\rho^2)$ into this formula,
this becomes identical to \eqref{eq:complex-hyperbolic-metric} (with $\rho$ replaced by $\tilde{\rho}$).

Before discussing general $n$, let us examine the case $n=3$ to get a flavor of the situation.
In this dimension the polynomial $P_3$ is given by \eqref{eq:polynomial-P3},
and if we introduce $\rho$ by \eqref{eq:coordinate-change-r-to-rho}, it is
\begin{equation}
	\label{eq:P-for-6-dim}
	P_3=\frac{64c^{-4}\rho^6}{(1-\rho^2)^6}
	\left(1-\frac{1}{2}(1-c^{-1})\rho^2-2(1-c^{-1})\rho^4+\frac{1}{2}(3-5c^{-1}+2c^{-2})\rho^6\right).
\end{equation}
Consequently, the metric $g_c$ is expressed as
\begin{equation}
	\label{eq:family-for-6-dim}
	g_c=\frac{4}{(1-\rho^2)^2}
	\left(\Phi_3^{-1}d\rho^2+\rho^2\Phi_3\theta^2
	+\rho^2(1-\rho^2+c^{-1}\rho^2)g_{\mathbb{C}P^2}\right),
\end{equation}
where
\begin{equation*}
	\Phi_3
	=\frac{1-\frac{1}{2}(1-c^{-1})\rho^2-2(1-c^{-1})\rho^4
	+\frac{1}{2}(3-5c^{-1}+2c^{-2})\rho^6}{(1-\rho^2+c^{-1}\rho^2)^2}.
\end{equation*}
Therefore $g_c$ converges, in the $C^\infty$ topology on any compact subset of $B^6$, to
\begin{equation*}
	g_\infty=\frac{4}{(1-\rho^2)^2}
	\left(\frac{1}{1+\frac{3}{2}\rho^2}d\rho^2+\rho^2(1+\tfrac{3}{2}\rho^2)\theta^2
	+\rho^2(1-\rho^2)g_{\mathbb{C}P^2}\right).
\end{equation*}
This is actually the complex hyperbolic metric \eqref{eq:complex-hyperbolic-metric} up to diffeomorphism action
and a constant factor.
Namely, if we set $1-\tilde{\rho}^2=(1-\rho^2)/(1+\frac{3}{2}\rho^2)$, then
\begin{equation*}
	g_\infty
	=\frac{8}{5}\left(\frac{d\tilde{\rho}^2+\tilde{\rho}^2\theta^2}{(1-\tilde{\rho}^2)^2}
	+\frac{\tilde{\rho}^2g_{\mathbb{C}P^2}}{1-\tilde{\rho}^2}\right)
	=\frac{4}{5}g_{\mathbb{C}H^3}.
\end{equation*}
Thus we get the proof of Theorem \ref{thm:limit-c-infinity} for $n=3$.

Now we consider the general case.

\begin{proof}[Proof of Theorem \ref{thm:limit-c-infinity}]
	In terms of $\rho$, the metric $g_c$ is expressed as \eqref{eq:metric-in-terms-of-rho}.
	We are going to show that it is convergent to
	\begin{equation}
		\label{eq:g-infinity-in-rho}
		g_\infty=\frac{4}{(1-\rho^2)^2}
		\left(\frac{1}{1+a_n\rho^2}d\rho^2
		+\rho^2(1+a_n\rho^2)\theta^2+\rho^2(1-\rho^2)g_{\mathbb{C}P^{n-1}}\right)
	\end{equation}
	in the $C^\infty$ topology on any compact subset of $B^{2n}$, where $a_n=3(n-1)/(n+1)$.
	For this, we need to show that
	\begin{equation*}
		\tilde{\alpha}^2\to\frac{4}{(1-\rho^2)^2}\cdot\frac{1}{1+a_n\rho^2},\qquad
		\rho^{-2}\beta^2\to\frac{4}{(1-\rho^2)^2}\cdot(1+a_n\rho^2),\qquad
		\rho^{-2}\gamma^2\to\frac{4}{1-\rho^2}
	\end{equation*}
	in the $C^\infty$ topology on compact subsets.
	Once this is done, then since $1-\tilde{\rho}^2=(1-\rho^2)/(1+a_n\rho^2)$ makes
	\eqref{eq:g-infinity-in-rho} into $\frac{n+1}{2n-1}g_{\mathbb{C}H^n}$, the proof is complete.

	The formula like \eqref{eq:P-for-6-dim} for general $n$ can be obtained by
	first expressing the polynomial $P_n$ in terms of $s=r-1$
	and then putting \eqref{eq:coordinate-change-r-to-rho} into it.
	Any occurrence of $s$ ends up in a factor $c^{-1}$, and the result will look like
	\begin{equation*}
		P_n=c^{-n-1}p_{n+1}(\rho)+c^{-n-2}p_{n+2}(\rho)+\dots+c^{-2n}p_{2n}(\rho),
	\end{equation*}
	where each $p_j(\rho)$ is independent of $c$. What we need here is the expression of $p_{n+1}(\rho)$.
	By \eqref{eq:P-for-SU-invariant-PE} and Lemma \ref{lem:polynomial-Q},
	\begin{equation*}
		\begin{split}
			p_{n+1}(\rho)
			&=(2n-1)\cdot\frac{2^n}{n+1}\left(\frac{2\rho^2}{1-\rho^2}\right)^{n+1}
			+2n\cdot\frac{2^{n-1}}{n}\left(\frac{2\rho^2}{1-\rho^2}\right)^n\\
			&=\frac{4\rho^2}{(1-\rho^2)^2}\left(\frac{4\rho^2}{1-\rho^2}\right)^{n-1}(1+a_n\rho^2).
		\end{split}
	\end{equation*}
	In addition,
	\begin{equation*}
		r^2-1=\frac{4c^{-1}\rho^2}{1-\rho^2}+\frac{4c^{-2}\rho^4}{(1-\rho^2)^2}
		\qquad\text{and}\qquad
		\frac{dr}{d\rho}=\frac{4c^{-1}\rho}{1-\rho^2}.
	\end{equation*}
	Therefore, by calculating using \eqref{eq:alpha-beta-gamma},
	we obtain the claimed convergence of $\tilde{\alpha}^2$, $\beta^2$, and $\gamma^2$.
\end{proof}

\subsection{Behavior when $c\to 0$}

We first remark that the metric \eqref{eq:limit-metric-c-zero} can also be expressed as
\begin{equation}
	\label{eq:limit-metric-c-zero-in-t}
	g_0=\frac{dt^2}{t^2+b_n}+t^2g_{\mathbb{C}P^{n-1}},
\end{equation}
where $b_n=2n/(2n-3)$, by the coordinate change $t=\sqrt{b_n}\sinh r$.
It suffices to verify the statement of Theorem \ref{thm:limit-c-zero}
for the metric \eqref{eq:limit-metric-c-zero-in-t} instead of \eqref{eq:limit-metric-c-zero}.

Again, we examine the case $n=3$ prior to the general proof.
Let us set $r=c^{-1/2}t+1$, or $s=r-1=c^{-1/2}t$,
and write $g_c=\tilde{\alpha}^2dt^2+\beta^2\theta^2+\gamma^2g_{\mathbb{C}P^2}$
($\tilde{\alpha}$ here is different from that in \eqref{eq:metric-in-terms-of-rho}).
Then
\begin{equation*}
	P_3=s^4(s^2+6s-2)+2c^{-1}s^3(s+4)
	=c^{-3}(t^6+2t^4)+c^{-5/2}(6t^5+8t^3)-2c^{-2}t^4,
\end{equation*}
and $r^2-1=c^{-1}t^2+2c^{-1/2}t$. Therefore the coefficients behave as
\begin{align*}
	\tilde{\alpha}^2=\alpha^2\left(\frac{dr}{dt}\right)^2&=c^{-1}(r^2-1)^2P_3^{-1}\to \frac{1}{t^2+2},\\
	\beta^2&=c^2(r^2-1)^{-2}P_3\to 0,\\
	\gamma^2&=c(r^2-1)\to t^2
\end{align*}
uniformly on $B_{R_1,R_2}=\set{R_1<t<R_2}$, and their derivatives in $t$ are uniformly convergent on $B_{R_1,R_2}$
as well. Hence $g_c$ collapses to
\begin{equation}
	\label{eq:limit-metric-c-zero-6-dim-in-t}
	\frac{1}{t^2+2}dt^2+t^2g_{\mathbb{C}P^2}
\end{equation}
in the sense that $g'_c=\tilde{\alpha}^2dt^2+\gamma^2g_{\mathbb{C}P^2}$
converges to \eqref{eq:limit-metric-c-zero-6-dim-in-t} in the $C^\infty$ topology on $B_{R_1,R_2}$.

The diameter of the fiber of $p\colon(0,\infty)\times S^5\to(0,\infty)\times\mathbb{C}P^2$ is $\pi\beta$.
Note that $(r^2-1)^{-2}s^k$ ($k=4$, $3$, $2$) is bounded on $B_R=\set{0<t<R}$ uniformly in $c$ because
\begin{equation*}
	\frac{s^k}{(r^2-1)^2}=\frac{s^k}{(s^2+2s)^2}
	\le\frac{s^k}{(s^2)^{k-2}(2s)^{4-k}}
	=\frac{1}{2^{4-k}}.
\end{equation*}
From this one can observe that $\beta^2=c^2(r^2-1)^{-2}P_3$ converges to $0$ uniformly on $B_R$.
Thus we have shown Theorem \ref{thm:limit-c-zero} for $n=3$.

The discussion above is easily generalized as follows.

\begin{proof}[Proof of Theorem \ref{thm:limit-c-zero}]
	Let $r=c^{-1/2}t+1$. Then \eqref{eq:P-for-SU-invariant-PE} implies that
	\begin{equation}
		\label{eq:polynomial-P-in-t}
		\begin{split}
			P_n
			&=(2n-1)\cdot\frac{1}{2n-1}c^{-n}t^{2n}+2nc^{-1}\cdot\frac{1}{2n-3}c^{-n+1}t^{2n-2}+c^{-n+1/2}\Psi\\
			&=c^{-n}(t^{2n}+b_nt^{2n-2})+c^{-n+1/2}\Psi,
		\end{split}
	\end{equation}
	where $\Psi$ and its derivatives in $t$ of arbitrary order are uniformly bounded for small $c>0$
	on any $B_{R_1,R_2}$.
	As a consequence, if we write $g_c=\tilde{\alpha}^2dt^2+\beta^2\theta^2+\gamma^2g_{\mathbb{C}P^{n-1}}$,
	then $g'_c=\tilde{\alpha}^2dt^2+\gamma^2g_{\mathbb{C}P^{n-1}}$ converges to
	\eqref{eq:limit-metric-c-zero-in-t} in the $C^\infty$ topology on $B_{R_1,R_2}$.

	One can easily see that $(r^2-1)^{-n+1}s^k$ ($k=2n-2$, $2n-3$, $\dotsc$, $n-1$) is
	uniformly bounded on $B_R$.
	Since $Q_n$ is a linear combination of $s^{2n}$, $s^{2n-1}$, $\dotsc$, $s^{n+1}$ and
	$Q_{n-1}$ is that of $s^{2n-2}$, $s^{2n-3}$, $\dotsc$, $s^n$ by Lemma \ref{lem:polynomial-Q},
	$\beta^2=c^2(r^2-1)^{-n+1}P_n=(2n-1)c^2(r^2-1)^{-n+1}Q_n+2nc(r^2-1)^{-n+1}Q_{n-1}$
	converges to $0$ uniformly on $B_R$, and so does $\pi\beta$, which is the diameter of the fiber of $p$.
\end{proof}

\bibliography{myrefs}

\end{document}